\newtheorem{theorem}{Theorem}[section] 
\newtheorem{definition}[theorem]{Definition}
\begin{document}
\title{Equivalence between estimates for quasi-commutators and commutators}
\author{Hans Zwart\\
University of Twente\\ Department of Applied Mathematics\\ P.O.\ Box 217, 7500 AE Enschede\\ The Netherlands}

\maketitle

\begin{abstract}
  In this short note we show that under some mild conditions on the space and the operators, an estimate for $\|Sf(A) - f(B)S\|$ needs only to be studied for invertible $S$ and $B$ equal to $A$. Thus estimates for a quasi-commutator can be derived from that for the commutator.
\end{abstract}

\section{Introduction}

Continuing the work of Peller \cite{Pell80},  Aleksandrov, Peller, Potapov, and Sukochev \cite{APPS11}, Rozendaal, Sukochev, and Tomskova \cite{RoST16} and others, we study the Lipschitz continuity of operator valued functions. A question that is studied in the above mentioned references is finding estimates for $\|S f(A) - f(B)S\|$ in terms of $\|S A - BS\|$. Here we show that under mild conditions this question needs only to be answered for $S$ invertible and $B=A$.

\section{Equivalence between commutator and quasi-commutator estimates}

To show the equivalence between estimates for the quasi- and for normal commutator problem, we have to assume that our class of spaces and functional calculus satisfies certain properties. We start with a property for the spaces.
\begin{definition}
Let ${\mathcal C}$ be a class of normed linear spaces. We say that ${\mathcal C}$ has the {\em stacking property} if it satisfies:
\begin{enumerate}
\item If $X_1, X_2 \in {\mathcal C}$, then $X_1 \oplus X_2 \in {\mathcal C}$. 
\item If $R$ is a bounded operator from $X_2$ to $X_1$ ($R \in {\mathcal L}(X_2,X_1)$), then
\begin{equation}
\label{eq:1}
  \left\| \left(\begin{array}{cc} 0 & R \\ 0 & 0 \end{array} \right) \right\|_{X_1 \oplus X_2} = \|R\|.
\end{equation}
\end{enumerate}
\end{definition}
Note that the last condition gives a relation between the norm of $X_1 \oplus X_2$ and that of $X_1$ and $X_2$. It could be replaced by the conditions
\[
  \left\|\left(\begin{array}{c} x_1 \\ 0 \end{array} \right)\right\|_{X_1 \oplus X_2} = \|x_1\| \mbox{ and }  \|x_2\| \leq \left\|\left(\begin{array}{c}  x_1 \\ x_2 \end{array} \right)\right\|_{X_1 \oplus X_2}.
\]
It is clear that the class of all Hilbert spaces has this property, but also the class of all ${\mathbb R}^n$, i.e., ${\mathcal C} = \{ {\mathbb R},\ldots, {\mathbb R}^n, \ldots \}$ possesses it. 

Associated with the class ${\mathcal C}$ with the stacking property and a scalar function $f$, we have a class of bounded linear operators ${\mathcal O}$ with the stacking property.
\begin{definition}
Let ${\mathcal C}$ be a class of normed linear spaces having the {\em stacking property}. We say that a class of bounded linear operators ${\mathcal O}$ has the {\em stacking property with respect to $f$} when the following three conditions hold
\begin{enumerate}
\item If $A_1, A_2 \in {\mathcal O}$, then $\left(\begin{array}{cc} A_1 & 0 \\ 0 & A_2 \end{array} \right) \in {\mathcal O}$,
\item For every $A \in {\mathcal O}$ $f(A)$ is well-defined, i.e., if $A \in {\mathcal O} \cap {\mathcal L}(X)$, then $f(A) \in {\mathcal L}(X)$, 
\item For every $A_1, A_2 \in {\mathcal O}$ there holds
\begin{equation}
\label{eq:6}
  f \left(\begin{array}{cc} A_1 & 0 \\ 0 & A_2 \end{array} \right) = \left(\begin{array}{cc} f(A_1) & 0 \\ 0 & f(A_2) \end{array} \right) .
\end{equation}
\end{enumerate}
\end{definition}

It is easy to see that the class of all self-adjoint operators (on Hilbert spaces) have the stacking property for every continuous $f$.

%
The following theorem shows that if the stacking property holds for ${\mathcal C}$ and ${\mathcal O}$, then the quasi- and standard commutator estimate are equivalent.
\begin{theorem}
\label{Tm1}
Let ${\mathcal C}$ be a class of normed linear spaces having the stacking property, and let ${\mathcal O}$ be a class of bounded linear operators having the stacking property with respect to $f$ (given). If for all $X \in {\mathcal C}$, $A \in {\mathcal O} \cap {\mathcal L}(X)$  and invertible $Q \in {\mathcal L}(X)$, there exists a $g_1 : [0,\infty) \mapsto [0,\infty)$ such that the following inequality holds
\begin{equation}
\label{eq:2}
  \|Q f(A) - f(A) Q\| \leq g_1(\|QA - AQ\|),
\end{equation}
then for all $X_1,X_2 \in {\mathcal C}$, $A_1 \in {\mathcal O} \cap {\mathcal L}(X_1)$, $A_2 \in {\mathcal O} \cap {\mathcal L}(X_2)$ and $S \in {\mathcal L}(X_2,X_1)$  there exists a $g_2 : [0,\infty) \mapsto [0,\infty)$ such that the following inequality holds
\begin{equation}
\label{eq:3}
  \|f(A_1) S- S f(A_2) \| \leq  g_2( \|A_1S - SA_2 \|).
\end{equation}
Furthermore, $g_2$ equals the $g_1$ for $Q= \left(\begin{smallmatrix} I &-S \\ 0 & I \end{smallmatrix} \right) $ and $A=  \left(\begin{smallmatrix} A _1&0 \\ 0 & A_2 \end{smallmatrix} \right) $.
\end{theorem}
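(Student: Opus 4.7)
The theorem statement already hands us the witnesses, so the plan is simply to verify that the explicit choice works. Working inside $X_1 \oplus X_2$, I would set
\[
  A = \left(\begin{array}{cc} A_1 & 0 \\ 0 & A_2 \end{array}\right), \qquad Q = \left(\begin{array}{cc} I & -S \\ 0 & I \end{array}\right).
\]
The stacking property of ${\mathcal C}$ puts $X_1 \oplus X_2$ in ${\mathcal C}$; condition~(1) of the operator stacking property puts $A$ in ${\mathcal O}$; and $Q$ is manifestly invertible with inverse $\left(\begin{smallmatrix} I & S \\ 0 & I \end{smallmatrix}\right)$. So the hypothesis~(\ref{eq:2}) applies to this pair.

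Next I would carry out the two block calculations. Direct $2\times 2$ multiplication gives
\[
  QA - AQ = \left(\begin{array}{cc} 0 & A_1 S - S A_2 \\ 0 & 0 \end{array}\right),
\]
and, using~(\ref{eq:6}) to write $f(A) = \mathrm{diag}(f(A_1), f(A_2))$,
\[
  Q f(A) - f(A) Q = \left(\begin{array}{cc} 0 & f(A_1) S - S f(A_2) \\ 0 & 0 \end{array}\right).
\]
Both are strictly upper-triangular with a single nonzero block, so the norm identity~(\ref{eq:1}) from the space stacking property yields, on the nose, $\|QA - AQ\| = \|A_1 S - S A_2\|$ and $\|Q f(A) - f(A) Q\| = \|f(A_1) S - S f(A_2)\|$.

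Plugging these two equalities into~(\ref{eq:2}) produces~(\ref{eq:3}) with $g_2 = g_1$. I do not expect a genuine obstacle: the whole content of the proof is the observation that a quasi-commutator on $X_1 \oplus X_2$ can be rewritten as the commutator of the block-diagonal $A$ with the invertible upper-triangular $Q$, and the normalization axiom~(\ref{eq:1}) is exactly what is needed to make the translation of norms an equality rather than merely an inequality.
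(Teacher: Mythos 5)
Your proposal is correct and follows essentially the same route as the paper: the same choice of $A = \left(\begin{smallmatrix} A_1 & 0 \\ 0 & A_2 \end{smallmatrix}\right)$ and invertible $Q = \left(\begin{smallmatrix} I & -S \\ 0 & I \end{smallmatrix}\right)$, the same block computations turning the quasi-commutator into a commutator, and the same use of the norm identity~(\ref{eq:1}) to conclude~(\ref{eq:3}) with $g_2 = g_1$. If anything, your write-up is slightly more explicit than the paper's in verifying that $X_1 \oplus X_2 \in {\mathcal C}$, $A \in {\mathcal O}$, and that~(\ref{eq:6}) gives the block-diagonal form of $f(A)$.
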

\begin{proof}
For the $A_1,A_2$, and $S$ as in (\ref{eq:3}) we have the following equality
\begin{align*}
 \left(\begin{array}{cc} I &-S \\ 0 & I \end{array} \right) \left(\begin{array}{cc} f(A_1) &0 \\ 0 & f(A_2) \end{array} \right) - &\  
 \left(\begin{array}{cc} f(A_1) &0 \\ 0 & f(A_2) \end{array} \right) \left(\begin{array}{cc} I & -S \\ 0 & I \end{array} \right) \\
  =&\ \left(\begin{array}{cc} 0 & f(A_1)S-Sf(A_2) \\ 0 & 0 \end{array} \right).
\end{align*}
By our assumptions we can see the top line as $Qf(A)-f(A)Q$ with
\[
  Q= \left(\begin{array}{cc} I &-S \\ 0 & I \end{array} \right)  \mbox{ and } A=  \left(\begin{array}{cc} A _1&0 \\ 0 & A_2 \end{array} \right).
\]
Note that the first operator is invertible.
Using the assumptions in the theorem, we obtain
\begin{align*}
  \| f(A)S-Sf(B)\| =&\ \left\|  \left(\begin{array}{cc} 0 & f(A)S-Sf(B) \\ 0 & 0 \end{array} \right) \right\| \\
  =&\ \left\|  \left(\begin{array}{cc} I &-S \\ 0 & I \end{array} \right) \left(\begin{array}{cc} f(A) &0 \\ 0 & f(B) \end{array} \right) -  
 \left(\begin{array}{cc} f(A) &0 \\ 0 & f(B) \end{array} \right) \left(\begin{array}{cc} I & -S \\ 0 & I \end{array} \right) \right\| \\
 \leq &\ g_1\left( \left\|  \left(\begin{array}{cc} I &-S \\ 0 & I \end{array} \right) \left(\begin{array}{cc} A &0 \\ 0 & B \end{array} \right) -  
 \left(\begin{array}{cc} A &0 \\ 0 & B \end{array} \right) \left(\begin{array}{cc} I & -S \\ 0 & I \end{array} \right) \right\|\right) \\
 =&\ g_1\left( \left\|  \left(\begin{array}{cc} 0 & AS-SB \\ 0 & 0 \end{array} \right) \right\| \right) = g_1(\| AS-SB\| ),
\end{align*}
where we have used (\ref{eq:2}). Thus we have shown (\ref{eq:3}).
%
%
\end{proof}

From the theorem we see that if (for instance) $g_1$ can be chosen independently of $Q$, then $g_2$ is independent of $S$. 

As a consequence of our technique, we show next that if $g_1$ can be chosen independent of $Q$, then $f$ must be Lipschitz continuous.
\begin{theorem}
\label{Tm3}
Let ${\mathcal C}$ be a class of normed linear spaces having the stacking property, and let ${\mathcal O}$ be a class of operators having the stacking property with respect to a given $f$. Furthermore, let for all $X \in {\mathcal C}$, $A \in {\mathcal L}(X) \cap {\mathcal O}$  and invertible $Q \in {\mathcal L}(X)$ the following inequality holds
\begin{equation}
\label{eq:7-bis}
  \|Q f(A) - f(A) Q\| \leq g_1(\|QA - AQ\|).
\end{equation}
If $g_1$ can be chosen independently of $Q$, and if for some $A_1 \in {\mathcal O}$ and some 
 $\varepsilon \in {\mathbb C}$ we have that $A_1+ \varepsilon I \in {\mathcal O}$,  then the following inequality holds
\begin{equation}
\label{eq:10}
  \|f(A_1+\varepsilon I) -  f(A_1) \| \leq M|\varepsilon| 
\end{equation}
with $M$ independent of $\varepsilon$.
\end{theorem}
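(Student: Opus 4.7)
The plan is to apply Theorem \ref{Tm1} with $X_2 = X_1$, $A_2 = A_1 + \varepsilon I$, and $S = \lambda I$ for a scalar $\lambda > 0$ to be chosen at the end. The hypothesis $A_1+\varepsilon I \in {\mathcal O}$ together with the stacking property of ${\mathcal O}$ is precisely what is needed to ensure that the block diagonal operator $\mathrm{diag}(A_1, A_1+\varepsilon I)$ appearing in the proof of Theorem \ref{Tm1} lies in ${\mathcal O}$, so all hypotheses of Theorem \ref{Tm1} are met.

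With this choice of $S$ the two sides of the quasi-commutator estimate simplify scalarly: since $A_1 S - S A_2 = -\lambda \varepsilon I$ and $f(A_1) S - S f(A_2) = \lambda \bigl(f(A_1) - f(A_1+\varepsilon I)\bigr)$, Theorem \ref{Tm1} gives
\[
   \lambda \,\bigl\|f(A_1) - f(A_1+\varepsilon I)\bigr\| \;\leq\; g_1(\lambda |\varepsilon|).
\]
Here the relevant $Q$ in Theorem \ref{Tm1} is $\bigl(\begin{smallmatrix} I & -\lambda I \\ 0 & I \end{smallmatrix}\bigr)$, which is invertible for every $\lambda$.

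Now the assumption that $g_1$ is independent of $Q$ is exploited: the same function $g_1$ appears for every $\lambda > 0$, so one is free to let $\lambda$ depend on $\varepsilon$. For $\varepsilon \neq 0$, choosing $\lambda = 1/|\varepsilon|$ makes the argument of $g_1$ equal to $1$ and produces
\[
   \bigl\|f(A_1) - f(A_1+\varepsilon I)\bigr\| \;\leq\; g_1(1)\,|\varepsilon|,
\]
so one may take $M := g_1(1)$, which is indeed independent of $\varepsilon$. The case $\varepsilon = 0$ is trivial.

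There is no real obstacle; the whole argument is a one-line scaling trick. Rescaling $S$ by $\lambda$ multiplies both sides of the quasi-commutator estimate by the same factor $\lambda$, while multiplying the argument of $g_1$ by $\lambda$ as well, so one can normalize the argument to any fixed positive constant and read off a linear bound in $|\varepsilon|$. The only things worth checking are that $\lambda I \in {\mathcal L}(X_1)$ is a legitimate choice of $S$ in Theorem \ref{Tm1} (it is) and that the hypothesis $A_1+\varepsilon I \in {\mathcal O}$ is used (it is, in order to form the block diagonal operator).
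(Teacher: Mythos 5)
Your proposal is correct and takes essentially the same route as the paper: the paper's own proof works with $Q = \left(\begin{smallmatrix} I & \varepsilon^{-1} I \\ 0 & I \end{smallmatrix}\right)$ and $A = \left(\begin{smallmatrix} A_1 & 0 \\ 0 & A_1 + \varepsilon I \end{smallmatrix}\right)$, which is precisely your choice $S = \lambda I$ with $\lambda = 1/|\varepsilon|$ (up to a sign/phase in the off-diagonal block), so that the commutator has norm one and the $f$-commutator encodes $\varepsilon^{-1}\bigl(f(A_1+\varepsilon I)-f(A_1)\bigr)$. The only difference is presentational: you invoke Theorem \ref{Tm1} as a black box and then rescale, while the paper builds the factor $\varepsilon^{-1}$ into $Q$ from the start and repeats the computation from the proof of Theorem \ref{Tm1}.
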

\begin{proof}
The following equalities are immediately 
\[
  \left(\begin{array}{cc} I & \varepsilon^{-1} I  \\ 0 & I \end{array} \right) \left(\begin{array}{cc} A_1 &0 \\ 0 & A_1+\varepsilon I \end{array} \right)  -  \left(\begin{array}{cc} A_1  & 0 \\ 0 & A_1 + \varepsilon I \end{array} \right) \left(\begin{array}{cc} I & \varepsilon^{-1} I  \\ 0 & I \end{array} \right) = \left(\begin{array}{cc} 0  & I \\ 0 &0 \end{array} \right).
\]
\begin{align*}
  \left(\begin{array}{cc} I & \varepsilon^{-1} I  \\ 0 & I \end{array} \right) \left(\begin{array}{cc} f(A_1) &0 \\ 0 & f(A_1+\varepsilon I )\end{array} \right)  - &\ \left(\begin{array}{cc} f(A_1)  & 0 \\ 0 & f(A_1 + \varepsilon I) \end{array} \right) \left(\begin{array}{cc} I & \varepsilon^{-1} I  \\ 0 & I \end{array} \right) = \\
  &\left(\begin{array}{cc} 0  & \varepsilon^{-1} (f(A_1+\varepsilon I )-f(A_1)) \\ 0 &0 \end{array} \right).
\end{align*}
So as in the proof of Theorem \ref{Tm1} we find that
\[
  \| \varepsilon^{-1} (f(A_1+\varepsilon I )-f(A_1)) \| \leq g_1(\|I\|) = g_1(1)
\]
Since by assumption $g_1$ is independent of $Q$, it is independent of $\varepsilon$. Hence inequality (\ref{eq:10}) is shown.\end{proof}

As a consequence of the theorem we see that if ${\mathbb R} \in {\mathcal C}$ and also ${\mathbb R} \in {\mathcal O}$, then $f$ is Lipschitz continuous. Furthermore, if $f$ is an entire function, ${\mathbb C} \in {\mathcal C}$, and  ${\mathbb C} \in {\mathcal O}$, then by the maximum modulus Theorem of complex analysis $f(z) = m z + f_0$.

Similarly as the above proof we find the following. When $A_1,A_2 \in {\mathcal O}$ commute, then
\[
  \|(A_1-A_2)^{-1} (f(A_2)-f(A_1)) \| \leq g_1(\|I\|),
\]
provided $A_1-A_2$ is invertible.
Hereby we use that $S = (A_1-A_2)^{-1} $ satisfies $A_1S - SA_2 =I$.

Choosing $S=I$ in (\ref{eq:3}) we see from Theorem \ref{Tm1} that the inequality (\ref{eq:2}) implies 
\[
  \|f(A_1) - f(A_2) \| \leq g_2(\|A_1 - A_2 \| )
\]
for all $A_1,A_2 \in {\mathcal O}$. If the class ${\mathcal O}$ is closed under similarity transformation, then the reserve holds as well.
\begin{theorem}
\label{Tm4}
Let ${\mathcal C}$ be a class of normed linear spaces having the stacking property, and let ${\mathcal O}$ be a class of operators having the stacking property with respect to a given $f$. Assume further that ${\mathcal O}$ is closed under similarity transformation, i.e., if $A \in {\mathcal L}(X)$ lies in ${\mathcal O}$, then also $QAQ^{-1} \in {\mathcal O}$ for every (boundedly) invertible $Q \in {\mathcal L}(X)$ and $f( QAQ^{-1}) = Q f(A) Q^{-1}$.

Under these assumptions we have that if for all $X \in {\mathcal C}$ and for all $A,B \in {\mathcal L}(X) \cap {\mathcal O}$ the following inequality holds
\begin{equation}
\label{eq:7}
  \| f(A) - f(B) \| \leq g_1(\|A - B\|),
\end{equation}
then for all $X_1,X_2 \in {\mathcal C}$, $A \in {\mathcal L}(X_1) \cap {\mathcal O}$, $B \in {\mathcal L}(X_2)\cap {\mathcal O}$ and $S \in {\mathcal L}(X_2,X_1)$ the following inequality holds
\begin{equation}
\label{eq:8}
  \|f(A_1) S- S f(A_2) \| \leq g_2( \|A_1S - SA_2 \|).
\end{equation}
\end{theorem}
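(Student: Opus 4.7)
The plan is to construct, on the stacked space $X_1\oplus X_2$, a pair of operators whose difference encodes $A_1 S - S A_2$ and whose values under $f$ differ by an off-diagonal block encoding $f(A_1)S - S f(A_2)$. Applying the difference estimate (\ref{eq:7}) on $X_1\oplus X_2$ will then deliver (\ref{eq:8}) immediately. The device linking the two is the similarity by $Q=\left(\begin{smallmatrix} I & -S \\ 0 & I\end{smallmatrix}\right)$, which the theorem's similarity assumption is tailor-made to handle; in effect, the construction reverses the block trick used in the proof of Theorem \ref{Tm1}.

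First I would set $\tilde A=\left(\begin{smallmatrix} A_1 & 0 \\ 0 & A_2\end{smallmatrix}\right)$, which belongs to $\mathcal O$ by the stacking property of $\mathcal O$, and define $\tilde B:=Q\tilde A Q^{-1}$, which belongs to $\mathcal O$ by the similarity closure assumption. A direct block multiplication yields
\begin{equation*}
\tilde B-\tilde A=\left(\begin{array}{cc} 0 & A_1 S-SA_2 \\ 0 & 0 \end{array}\right),
\end{equation*}
so by condition (\ref{eq:1}) of the stacking property of $\mathcal C$, $\|\tilde B-\tilde A\|=\|A_1 S-S A_2\|$.

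Next, using $f(\tilde A)=\mathrm{diag}(f(A_1),f(A_2))$ (stacking with respect to $f$, i.e.\ (\ref{eq:6})) together with $f(\tilde B)=Qf(\tilde A)Q^{-1}$ (the similarity closure hypothesis), the same computation gives
\begin{equation*}
f(\tilde B)-f(\tilde A)=\left(\begin{array}{cc} 0 & f(A_1)S - S f(A_2) \\ 0 & 0 \end{array}\right),
\end{equation*}
which has norm $\|f(A_1)S - S f(A_2)\|$. Applying the hypothesis (\ref{eq:7}) on $X_1\oplus X_2$ to the pair $\tilde A,\tilde B\in\mathcal L(X_1\oplus X_2)\cap\mathcal O$ then yields (\ref{eq:8}), and one can take $g_2=g_1$.

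I expect no real obstacle beyond the book-keeping that ensures every operator involved actually belongs to $\mathcal O$: namely, that $\tilde A\in\mathcal O$ (by stacking) and that $\tilde B=Q\tilde A Q^{-1}\in\mathcal O$ with $f(\tilde B)=Qf(\tilde A)Q^{-1}$ (by similarity closure). Both are exactly the hypotheses placed on $\mathcal O$, so the argument is essentially a mirror image of the proof of Theorem \ref{Tm1} rather than introducing any genuinely new ingredient.
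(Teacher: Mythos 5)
Your proposal is correct and follows essentially the same route as the paper's own proof: both define $B = Q\,\mathrm{diag}(A_1,A_2)\,Q^{-1}$ with $Q=\left(\begin{smallmatrix} I & -S \\ 0 & I\end{smallmatrix}\right)$, use the similarity-closure and stacking hypotheses to identify $B-A$ and $f(B)-f(A)$ as the off-diagonal blocks $A_1S-SA_2$ and $f(A_1)S-Sf(A_2)$, and then apply (\ref{eq:7}) together with (\ref{eq:1}). There is no substantive difference, only that the paper writes out the upper-triangular form of $B$ explicitly.
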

\begin{proof} 
For the $A_1,A_2$, and $S$ in (\ref{eq:8}) we have the following equality
\begin{equation}
\label{eq:12}
  \left(\begin{array}{cc} I &-S \\ 0 & I \end{array} \right) \left(\begin{array}{cc} A_1 &0 \\ 0 & A_2 \end{array} \right) \left(\begin{array}{cc} I & S \\ 0 & I \end{array} \right) 
   = \left(\begin{array}{cc} A_1 & A_1S-SA_2 \\ 0 & A_2 \end{array} \right) =:B.
\end{equation}
By our assumption, we find
\begin{align}
\nonumber
  f(B)=&\  \left(\begin{array}{cc} I &-S \\ 0 & I \end{array} \right) \left(\begin{array}{cc} f(A_1) &0 \\ 0 & f(A_2) \end{array} \right) \left(\begin{array}{cc} I & S \\ 0 & I \end{array} \right) \\
  \label{eq:4}
  =&\ \left(\begin{array}{cc} f(A_1) & f(A_1)S-Sf(A_2) \\ 0 & f(A_2) \end{array} \right).
\end{align}
Applying inequality (\ref{eq:7}) with this $B$ and $A= \left( \begin{smallmatrix} A_1 &0 \\ 0 & A_2 \end{smallmatrix} \right)$ we obtain
\begin{align*}
  \| f(A_1)S-Sf(A_2)\| =&\ \left\|  \left(\begin{array}{cc} 0 & f(A_1)S-Sf(A_2) \\ 0 & 0 \end{array} \right) \right\| \\
  =&\ \left\| \left(\begin{array}{cc} f(A_1) & f(A_1)S-Sf(A_2) \\ 0 & f(A_2) \end{array} \right) - 
  \left(\begin{array}{cc} f(A_1) &0 \\ 0 & f(A_2) \end{array} \right) \right\| \\
  =&\ \|f(B) - f(A) \| \\
 \leq &\ g_1(\|B-A\|) \\
  =&\ g_1\left( \left\|  \left(\begin{array}{cc} 0 & A_1S-SA_2 \\ 0 & 0 \end{array} \right) \right\| \right) = g_1(\| A_1S-SA_2\| ),
\end{align*}
which proves the assertion.
\end{proof}

The above theorem indicates when operator Lipschitz continuity (inequality (\ref{eq:7})) is equivalent to a quasi-commutator property (inequality (\ref{eq:8})). In \cite{APPS11} the linear spaces (Hilbert spaces) and the operators (normal operators) have the stacking property, but the class of normal operators is not closed under similarity transformation, and so the above theorem does not apply. In \cite{RoST16} the class of spaces does not have the stacking property.

\end{document}